\newtheorem{theorem}{Theorem}[section]
\newtheorem{problem}{Problem}[section]
\newtheorem{lemma}[theorem]{Lemma}
\theoremstyle{definition}
\theoremstyle{remark}
\newtheorem{remark}[theorem]{Remark}
\begin{document}

\title{A Stochastic production planning problem}
\author{Elena Cristina Canepa \\
{\small University Politehnica of Bucharest, Bucharest, Romania.}\\
{\small E-mail address: cristinacanepa@yahoo.com} \and Dragos-Patru Covei \\
{\small The Bucharest University of Economic Studies, Bucharest, Romania. }
\and {\small E-mail address: dragos.covei@csie.ase.ro} \and Traian A. Pirvu 
\\
{\small McMaster University, Hamilton, Canada. }\\
{\small E-mail address: tpirvu@math.mcmaster.ca} }
\maketitle

\abstract{Stochastic production planning problems were studied in several works; the
model with one production good was discussed in \cite{BS}. The extension to
several economic goods is not a trivial issue as one can see from the recent
works \cite{CDPAMC}, \cite{CP} and \cite{GK}. The following qualitative
aspects of the problem are analyzed in \cite{CP}; the existence of a
solution and its characterization through dynamic programming/HJB equation,
as well as the verification (i.e., the solution of the HJB equation yields
the optimal production of the goods). In this paper, we stylize the model of 
\cite{CDPAMC} and \cite{CP} in order to provide some quantitative answers to
the problem. This is possible especially because we manage to solve the HJB
equation in closed form. Among other results, we find that the optimal
production rates are the same across all the goods and they also turn to be
independent of some model parameters. Moreover we show that production rates
are increasing in the aggregate number of goods produced, and they are also
uniformly bounded. Numerical experiments show some patterns of the output.}

\section{Introduction}

Production planning problems were studied for quite some time. \cite{TS}
considered a stochastic production-inventory model to determine optimal
production rates, i.e., the ones which minimize a discounted quadratic loss
function. Their solution has three terms: the initial inventory, a steady
state of the solution and a correction term which kicks in when time
approaches maturity. This work was extended from a deterministic to a
stochastic framework by \cite{BS} and \cite{ST} who added randomness to the
dynamics of the inventory process. The work of \cite{FSS} looks at the
infinite horizon stochastic production planning problem in which a
continuous-time Markov chain models the demand.

The aforementioned papers consider in general the production planning
problem with one economic good only. The extension to several economic goods
makes the problem more mathematically involved as one can see from the
recent works of \cite{CDPAMC} and \cite{CP}. Moreover, \cite{CP}
characterized the solution through dynamic programming/HJB equation; using
regularity and estimate results from the area of partial differential
equations a classical solution of the HJB was established, and the
verification result was proved. Since these works deal with the infinite
horizon, a transversality condition was imposed on the value function, and
it was shown that the value function verifies it. The paper \cite{GK} is
within the paradigm of multiple goods' production. Because of the complexity
of HJB equations, the goal is not to solve the HJB equations, but to offer
an approximate solution.

In this paper we specialized the model of \cite{CDPAMC} and \cite{CP} to
make it more tractable and to obtain quantitative results. Our main
contribution is that we solved in closed form the HJB equation and the
optimal production rate. The solution displays a mean field structure; the
optimal production rate of some good is a function of the number of that
specific produced good and an average of all the goods produced (this
average is expressed by a norm of the vector of goods produced). By
exploiting the structure of our closed form solution we can see that the
optimal production rates are the same across all goods and they do not
depend on some model parameters. Moreover, the optimal production rates are
zeros when there are no goods produced, and they are of order $O(\frac{1}{N}%
) $ ($N$ here stands for the number of goods). We show that production rates
are increasing in the aggregate number of goods produced, and they are also
uniformly bounded. Numerical experiments reveal that the production rate is
a decreasing function of the number of goods' type $N$ and, the variance of
the number of goods produced.

Finally, the HJB equation characterizing the optimal production rates
appears in other practical applications as we mention in the last section of
the paper.

Now we are ready to present the organization of this paper. Section \ref%
{sec2} describes the model. Section \ref{sec3} provides the methodology.
Section \ref{sec4} presents other practical applications of the mathematics
developed. The paper ends with an appendix containing a technical proof.

\section{The model \label{sec2}}

Consider a factory producing $N$ types of economic goods which stores them
in an inventory designated place. Next, we describe the model
mathematically. There exists a complete probability space $(\Omega ,\mathcal{%
F},\{\mathcal{F}_{t}\}_{0\leq t\leq \infty },P),$ on which lives a $N$%
-dimensional Brownian motion denoted by $w=\left( w_{1},...,w_{N}\right) $.

The filtration $\{\mathcal{F}_{t}\}_{0\leq t\leq \infty },$ is the natural
filtration of the Brownian motion. Let $p\left( t\right) =\left(
p_{1}(t),...,p_{N}(t)\right) $, represent the production rate at time $t$
(control variable). Next, let us introduce the control variables. Let the
threshold $p^{0}=\left( p_{1}^{0},...,p_{N}^{0}\right) $ be a vector
standing for the factory optimal production level. This level can be optimal
from a technological standpoint, but its implementation may not be optimal
because of inventory costs.

Next, let $l=\left( l_{1},...,l_{N}\right) $ be the factory-optimal
inventory level which can be attained but not maintained since there is
noise in the system. In order to simplify the notations we assume that $%
p^{0}=l=\left( 0,...,0\right) $. This simplification is obtained by
considering deviations from the factory-optimal inventory level and the
factory-optimal production level. The deviations may be negative.

Next, let us describe the inventories. There exists a constant demand rate
for every economic good, demand rate represented by the vector $\xi =\left(
\xi _{1},...,\xi _{N}\right) $. Again, to simplify the notations we take $%
\xi =\left( 0,...,0\right) $ meaning that we consider deviations from the
constant demand rate.

Let $y_{i}^{0}$ denote the initial inventory level of good $i$, and $%
y_{i}(t) $ the inventory level of good $i$, at time $t$. These inventory
levels are modelled by the following system of stochastic differential
equations

\begin{equation}
dy_{i}\left( t\right) =(p_{i}-\xi _{i})dt+\sigma dw_{i}\text{, }y_{i}\left(
0\right) =y_{i}^{0}\text{, }i=1,...,N,  \label{cons0}
\end{equation}%
where $\sigma $ is a constant (non-zero) diffusion coefficient. Let us
recall that the stochasticity here is due to inventory spoilages which are
random in nature.

Let $\tau $ be the stopping time representing the moment when the inventory
level reaches some threshold $R$, i.e., 
\begin{equation*}
\tau =\inf_{t>0}\{\left\vert y(t)\right\vert \geq R\}.
\end{equation*}%
Here, $|\cdot |$ stands for the Euclidian norm, and this way of limiting the
inventory level is imposed for tractability. The factory may consider
stopping the production when the inventory level $R$ is attained and/or
exceeded.

\subsection{The Objective}

The performance over time of a production $p\left( t\right) =\left(
p_{1}(t),...,p_{N}(t)\right)$ is measured by means of its cost. At this
point we introduce the cost functional which yields the cost: 
\begin{equation}
J\left( p_{1},...,p_{N}\right) :=\text{ }E\int_{0}^{\tau } (|p(t)|^2+
|y(t)|^2)dt,  \label{opti}
\end{equation}%
which measures the quadratic loss. Again let us recall that we measure
deviations from an optimal state, whence the loss. At this point we are
ready to frame our objective, which is to minimize the cost functional.
i.e., 
\begin{equation}
\inf \{J\left( p_{1},...,p_{N}\right) \left\vert \, p_{i} \text{, }\forall
i=1,2,...,N\right. \}\text{, }  \label{min}
\end{equation}
subject to the It\u{A}%
\'{}
equation (\ref{cons0}).

\section{The Methodology \label{sec3}}

Having presented the problem we want to solve, now we provide our means to
tackle it. Our approach is based on the value function and dynamic
programming which leads to the HJB equation. Let $z$ denote the value
function, i.e., 
\begin{equation*}
z(y_{1}^{0},y_{2}^{0},\ldots ,y_{N}^{0})=\inf \{J\left(
p_{1},...,p_{N}\right) \left\vert \,p_{i}\text{, }\forall i=1,2,...,N\right.
\}\text{, }
\end{equation*}%
subject to the It\u{A}%
\'{}
equation (\ref{cons0}). We apply probabilistic techniques to characterize
the value function; that is we search for a function $U\left( x\right) $
such that the stochastic process $M^{p}(t)$ defined below 
\begin{equation*}
M^{p}\left( t\right) =U\left( y\left( t\right) \right)
-\int_{0}^{t}[f_{1}(p(s))+f_{2}(y(s))]\,ds,
\end{equation*}%
is supermartingale for all $p\left( t\right) =\left(
p_{1}(t),...,p_{N}(t)\right) $ and martingale for the optimal control $%
p^{\ast }\left( t\right) =\left( p_{1}^{\ast }(t),...,p_{N}^{\ast
}(t)\right) $. Once such a function is found it turns out that $-U=z$. We
search for $U$ a $C^{2}\left[ 0,R\right] $ function and the
supermartingale/martingale requirement yields by means of Ito's Lemma the
Hamilton-Jacobi-Bellman (HJB) equation which characterizes the value
function 
\begin{equation}
-\frac{\sigma ^{2}}{2}\Delta z-\left\vert x\right\vert ^{2}=\inf \{p\nabla
z+\left\vert p\right\vert ^{2}\left\vert \text{ }\forall i=1,2,...,N\right.
\}.  \label{solv}
\end{equation}%
This HJB can be turned into a partial differential equation (PDE) since a
simple calculation yields 
\begin{equation}
\inf \{p\nabla z+\left\vert p\right\vert ^{2}\left\vert p_{i}\text{ }\forall
i=1,2,...,N\right. \}=-\frac{1}{4}\left\vert \nabla z\right\vert ^{2}.
\label{foc}
\end{equation}%
Thus, the HJB equation becomes the PDE 
\begin{equation*}
-\frac{\sigma ^{2}}{2}\Delta z-\left\vert x\right\vert ^{2}=-\frac{1}{4}%
\left\vert \nabla z\right\vert ^{2}\text{ for }x\in \mathbb{R}^{N}\text{, }%
|x|\leq R\text{,}
\end{equation*}%
or, equivalently 
\begin{equation}
2\sigma ^{2}\Delta z+4\left\vert x\right\vert ^{2}=\left\vert \nabla
z\right\vert ^{2}\text{ for }x\in \mathbb{R}^{N}\text{, }|x|\leq R\text{.}
\label{eq}
\end{equation}%
The change of variable $z=-v$, yields the PDE 
\begin{equation}
\Delta v=\frac{4\left\vert x\right\vert ^{2}-\left\vert \nabla v\right\vert
^{2}}{2\sigma ^{2}}\text{ for }x\in \mathbb{R}^{N}\text{, }|x|\leq R\text{.}
\label{las2}
\end{equation}%
The gradient term in the above PDE can be removed by the change of variable $%
u\left( x\right) =e^{\frac{v\left( x\right) }{2\sigma ^{2}}}$, to get a
simpler PDE 
\begin{equation}
\left\{ 
\begin{array}{l}
\Delta u\left( x\right) =\frac{1}{\sigma ^{4}}\left\vert x\right\vert
^{2}u\left( x\right) \text{ for }x\in \mathbb{R}^{N}\text{, }|x|\leq R\text{%
, } \\ 
u\left( x\right) >0\text{ for }x\in \mathbb{R}^{N}\text{, }|x|\leq R.%
\end{array}%
\right.   \label{I}
\end{equation}

The value function will give us in turn the candidate optimal control. The
first order optimality conditions on the lefthand side of (\ref{foc}) are
sufficient for optimality since we deal with a quadratic (convex) function
and they produce the candidate optimal control as follows:

\begin{equation*}
p_{i}^{\ast }=\overline{p}_{i}(y_{1}\left( t\right) ,\ldots ,y_{N}\left(
t\right) )\text{, }i=1,...,N,
\end{equation*}%
and 
\begin{equation}
\overline{p}_{i}=\frac{1}{2}\frac{\partial v}{\partial x_{i}}\left(
x_{1},...,x_{N}\right) \text{, for }i=1,...,n\text{.}  \label{optio}
\end{equation}

\subsection{The Equation of Value Function \label{ms}}

Let $B_{R}\left( 0\right) $ be the ball in $\mathbb{R}^{N}$ centered at the
origin and radius $R>0$. The equation of the value function according to (%
\ref{I}) is 
\begin{equation}
\Delta u\left( x\right) =\frac{1}{\sigma ^{4}}\left\vert x\right\vert
^{2}u\left( x\right) \text{ in }B_{R}\left( 0\right) .\text{{}}
\label{master}
\end{equation}%
The boundary condition is taken to be 
\begin{equation}
u(0)=\alpha ,  \label{radial}
\end{equation}%
where $\alpha $ is a positive constant. The following result concerns the
equation of value function.

\begin{theorem}
\label{ode_sol} Given the positive constant $\alpha $, there exists a unique
positive radially symmetric solution $u_{\alpha }\in C^{2}\left[ 0,R\right] $%
, to the problem (\ref{master}) subject to the Dirichlet boundary condition
( \ref{radial}). Moreover, the solution is convex and increasing, and the
following holds true 
\begin{eqnarray}
u_{\alpha }^{\prime }(0) &=&0\text{,}  \label{prop1} \\
u_{\alpha }\left( r\right) &=&\alpha \left( 1+\underset{j=1}{\overset{\infty 
}{\Sigma }}\frac{1}{j!\left( N+2\right) \left( N+6\right) ...(N+4j-2)}\left( 
\frac{r^{2}}{2\sigma ^{2}}\right) ^{2j}\right) \text{, }  \label{prop4} \\
u_{\alpha }^{\prime }\left( r\right) &=&\alpha \underset{j=1}{\overset{%
\infty }{\Sigma }}\frac{4jr}{2\sigma ^{2}j!\left( N+2\right) \left(
N+6\right) ...(N+4j-2)}\left( \frac{r^{2}}{2\sigma ^{2}}\right) ^{2j-1},
\label{derivat}
\end{eqnarray}%
for all $r:=\left\vert x\right\vert \in \lbrack 0,R]$. In addition, 
\begin{eqnarray}
u_{\alpha }\left( r\right) &\leq &\alpha e^{\frac{r^{4}}{4\sigma ^{4}(N+2)}}%
\text{, }r\in \lbrack 0,R]\text{,}  \label{prop2} \\
\left( u_{\alpha }\right) ^{\prime }(r) &\leq &\frac{\alpha r^{3}}{\sigma
^{4}(N+2)}e^{\frac{r^{4}}{4\sigma ^{4}(N+2)}}\text{, }r\in \lbrack 0,R]\text{%
,}  \label{prop3}
\end{eqnarray}%
hold. \ 
\end{theorem}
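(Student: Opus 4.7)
The plan is to exploit radial symmetry to reduce (\ref{master}) to a linear ODE in $r=|x|$ and then construct the solution by a power series. Using $\Delta = \partial_r^2 + \frac{N-1}{r}\partial_r$ on radial functions, (\ref{master}) becomes
\begin{equation*}
u''(r) + \frac{N-1}{r}\, u'(r) = \frac{r^2}{\sigma^4}\, u(r), \qquad r \in (0,R],
\end{equation*}
with $u(0)=\alpha$. Since any $C^2$ radial function on a ball satisfies $u'(0)=0$, this simultaneously establishes (\ref{prop1}) and supplies the second initial condition that pins the solution down.

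Next I would substitute $u(r)=\sum_{k\geq 0} a_k r^k$ with $a_0=\alpha$ into the ODE. Matching the coefficient of $r^m$ forces $a_1=a_2=a_3=0$ and yields the recursion
\begin{equation*}
(m+2)(m+N)\, a_{m+2} = \frac{a_{m-2}}{\sigma^4}, \qquad m \geq 2,
\end{equation*}
so that $a_k=0$ unless $k$ is a multiple of $4$. Iterating at $k=4j$ produces
\begin{equation*}
a_{4j} = \frac{\alpha}{(2\sigma^2)^{2j}\, j!\,(N+2)(N+6)\cdots(N+4j-2)},
\end{equation*}
which is precisely (\ref{prop4}); termwise differentiation then gives (\ref{derivat}). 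The factorial growth of the denominators makes the series entire in $r$, so $u_\alpha\in C^\infty[0,R]$, and any other $C^2$ solution of the reduced ODE with the same initial data must satisfy the same recursion, so uniqueness is automatic. Because every $a_{4j}$ is positive, one reads off that $u_\alpha \geq \alpha>0$, $u_\alpha'(r)\geq 0$, and $u_\alpha''(r)\geq 0$ on $[0,R]$, so $u_\alpha$ is positive, increasing, and convex.

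Finally, for the exponential majorants (\ref{prop2})--(\ref{prop3}), I would apply the elementary lower bound $(N+2)(N+6)\cdots(N+4j-2) \geq (N+2)^j$ to dominate the series by
\begin{equation*}
u_\alpha(r) \leq \alpha \sum_{j\geq 0} \frac{1}{j!}\left(\frac{r^4}{4\sigma^4(N+2)}\right)^{\!j} = \alpha \exp\!\left(\frac{r^4}{4\sigma^4(N+2)}\right),
\end{equation*}
and the same idea, after absorbing the factor $4j/j! = 4/(j-1)!$ in the derivative series and reindexing, produces the prefactor $r^3/(\sigma^4(N+2))$ needed for $u_\alpha'(r)$. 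The main obstacle is purely book-keeping: keeping the constants $2\sigma^2$ versus $\sigma^4$ and the index shifts aligned so that the closed form and the two majorants match the statement exactly. The conceptual ingredients (radial reduction, integer-exponent power series, termwise comparison with the exponential) are elementary once the setup is in place.
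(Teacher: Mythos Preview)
Your proof is correct and takes a genuinely different route from the paper. The paper constructs $u_\alpha$ by Picard iteration on the integral form of the radial ODE: starting from $u_\alpha^0\equiv\alpha$ and setting $u_\alpha^k(r)=\alpha+\int_0^r t^{1-N}\int_0^t s^{N+1}\sigma^{-4}u_\alpha^{k-1}(s)\,ds\,dt$, it shows the sequence is uniformly Cauchy with increments bounded by $\frac{\alpha}{(k+1)!}\bigl(r^4/(4\sigma^4(N+2))\bigr)^{k+1}$, computes each iterate explicitly to reach (\ref{prop4}), and reads off (\ref{prop2})--(\ref{prop3}) by telescoping those same increment bounds. You instead substitute a power-series ansatz directly, solve the recursion, and obtain the exponential majorants from the coefficient comparison $(N+2)(N+6)\cdots(N+4j-2)\geq (N+2)^j$. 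Your approach is shorter and more transparent; the paper's iteration has the minor advantage that existence and uniform convergence are established first, without presupposing analyticity, and the majorants emerge from the iteration itself rather than from a separate estimate. For convexity the paper cites an external reference, whereas you read it off immediately from the positivity of the coefficients in $u_\alpha''$; for uniqueness the paper invokes the scaling symmetry of Remark~\ref{unic}, which is comparably informal to your recursion argument (strictly speaking, both need a word on why a $C^2$ radial solution with $u(0)=\alpha$ must coincide with the analytic one near the regular singular point $r=0$).
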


\subparagraph{\textbf{Proof.} It is done in the appendix}

\subsection{Verification}

In this subsection we show that the control of (\ref{cons}) is indeed
optimal. In a first step let us show that $M^{p}(t)$ 
\begin{equation*}
M^{p}\left( t\right) =U\left( y\left( t\right) \right)
-\int_{0}^{t}(|p(s)|^{2}+|y(s)|^{2})\,ds,
\end{equation*}%
is supermartingale for all 
\begin{equation*}
p\left( t\right) =\left( p_{1}(t),...,p_{N}(t)\right)
\end{equation*}%
and martingale for the optimal control 
\begin{equation*}
p^{\ast }\left( t\right) =\left( {p}_{1}^{\ast }(t),...,{p}_{N}^{\ast
}(t)\right) .
\end{equation*}%
Indeed, Ito%
\'{}
Lemma yields for the optimal control candidate 
\begin{equation*}
dM^{p}\left( t\right) =(\frac{\sigma ^{2}}{2}\Delta U(y(s))-\left\vert
y(s)\right\vert ^{2}+p(s)\nabla U(s)-\left\vert p(s)\right\vert
^{2})ds+\sigma p(s)\nabla {z}(y(s))dw(s).
\end{equation*}%
Then, the claim yields in light of HJB equation (\ref{solv}).

In a second step let us establish the optimality of $\left( p_{1}^{\ast
},...,p_{N}^{\ast }\right) $. The martingale/supermartingale principle
yields 
\begin{equation*}
EU\left( y^{\ast }\left( \tau \ast \right) \right) -E\int_{0}^{{\tau }\ast
}(|p^{\ast }(u)|^{2}+|y^{\ast }(u)|^{2})du=U(y^{\ast }\left( {0}\right)
)=U(y\left( {0}\right) ),
\end{equation*}%
and%
\begin{equation*}
EU\left( y^{{}}\left( \tau \right) \right) -E\int_{0}^{\tau
}(|p^{{}}(u)|^{2}+|y^{{}}(u)|^{2})du\leq U(y\left( {0}\right) ).
\end{equation*}%
Here, let us recall that $\tau \ast =\inf_{t>0}\{\left\vert y^{\ast
}(t))\right\vert \geq R\}$ and $\tau =\inf_{t>0}\{\left\vert {y}%
(t)\right\vert \geq R\}$. Moreover, 
\begin{equation*}
EU\left( y^{\ast }\left( {\tau }\ast \right) \right) =EU\left( y^{{}}\left(
\tau \right) \right) =2\sigma ^{2}{\ln {u(R)}},
\end{equation*}%
and this finishes the proof.

\subsection{Optimal Control}

Let us notice that equations (\ref{optio}) become 
\begin{equation}
\overline{p}_{i}(y_{1},\ldots ,y_{N})=\sigma ^{2}\frac{u_{\alpha }^{\prime
}(r)}{ru_{\alpha }(r)}y_{i}\text{, }r\neq 0\text{, }i=1,2\cdots N,
\label{optcont1}
\end{equation}%
and $r=|y|$. The optimal control is given by 
\begin{equation*}
{p}_{i}^{\ast }=\overline{p}_{i}(y_{1}\left( t\right) ,\ldots ,y_{N}\left(
t\right) )\text{, }i=1,...,N,
\end{equation*}%
and 
\begin{equation}
dy_{i}\left( t\right) =p_{i}^{\ast }dt+\sigma dw_{i}\text{, }y_{i}\left(
0\right) =y_{i}^{0}\text{, }i=1,...,N.  \label{cons}
\end{equation}%
This SDE system has a unique solution since the map $y\rightarrow \bar{p}%
_{i}(y)$, $i=1,...,N$, is Lipschitz on $[0,R]$. Let us notice that the
production rate 
\begin{equation}
\frac{\overline{p}_{i}}{y_{i}}=\sigma ^{2}\frac{u_{\alpha }^{\prime }(r)}{%
ru_{\alpha }(r)},\text{ }r\neq 0,  \label{optcont}
\end{equation}%
is the same across \textbf{all} goods. Let us notice the connection with 
\textbf{mean field models}, with the key quantity being $r=|y|$.

\begin{remark}
\label{unic}The choice of $\alpha >0$ is irrelevant because the value
function equation admits the following symmetry; if $u$ is the solution with 
$\alpha =1$, then $\alpha u$ is the solution for arbitrary $\alpha >0$.
However, both $u$ and $\alpha u$ yield the same optimal control (see (\ref%
{optcont})). Let us notice that if we impose the boundary condition $\bar{u}%
\left( R\right) =\alpha >0$ instead of (\ref{radial}) then we get a solution 
$\bar{u}$ which is a scalar multiple of $u$, i.e., $\bar{u}=Ku$, for some
constant $K>0$. Thus, $\bar{u}$ yields the same optimal control (see (\ref%
{optcont})). Therefore, the optimal control does not depend on the choices
of $\alpha $ and $R.$
\end{remark}

In light of this remark we set $\alpha =1$, so that 
\begin{equation}
u\left( r\right) :=u_{1}\left( r\right) =1+\underset{j=1}{\overset{\infty }{%
\Sigma }}\frac{1}{j!\left( N+2\right) \left( N+6\right) ...(N+4j-2)}\left( 
\frac{r^{2}}{2\sigma ^{2}}\right) ^{2j}\text{, }  \label{closedform}
\end{equation}%
for all $r\geq 0$, whence we can get the production rate $\sigma ^{2}\frac{%
u^{\prime }(r)}{ru(r)}$, $r\neq 0$, in closed form. Moreover, from (\ref%
{prop4}) we get that $\lim_{r\rightarrow 0}\frac{u^{\prime }(r)}{ru(r)}=0$,
thus the optimal production rates are zeros when there are no goods produced.

Using (\ref{closedform}) and operations with power series (see \cite{DS}
Chapter 1), we get the optimal production rate in closed form.

\begin{theorem}
The optimal production rate is given by 
\begin{equation*}
\frac{\bar{p}_{i}}{y_{i}}=\sigma ^{2}\frac{u^{\prime }(r)}{ru(r)}=\frac{%
4\sigma ^{2}}{r^{2}}\overset{\infty }{\underset{j=0}{\Sigma }}c_{j}\left[ 
\frac{r^{4}}{4\sigma ^{4}}\right] ^{j},\text{ }r\neq 0,
\end{equation*}%
where 
\begin{eqnarray*}
a_{0} &=&1\text{, }c_{0} =0\text{, }c_{j}=\frac{1}{a_{0}}\left[ b_{j}-%
\overset{j}{\underset{i=1}{\Sigma }}c_{j-i}a_{i}\right] \text{, }j=1,2,3,...
\\
a_{j} &=&\frac{1}{j!\left( N+2\right) \left( N+6\right) ...(N+4j-2)}\text{, }%
j=1,2,... \\
b_{j} &=&\frac{j}{j!\left( N+2\right) \left( N+6\right) ...(N+4j-2)}\text{, }%
j=1,2,...
\end{eqnarray*}
\end{theorem}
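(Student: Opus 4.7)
The plan is to reduce the claim to a standard formal power series division. I would set $t = r^{4}/(4\sigma^{4})$, rewrite both $u(r)$ and $u'(r)$ as power series in $t$, and then identify the coefficients of $u'(r)/(r\,u(r))$ by Cauchy-product inversion.

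First I would rewrite (\ref{closedform}) by observing that $\bigl(r^{2}/(2\sigma^{2})\bigr)^{2j} = \bigl(r^{4}/(4\sigma^{4})\bigr)^{j}$. This immediately gives
\begin{equation*}
u(r) \;=\; \sum_{j=0}^{\infty} a_{j}\, t^{j}, \qquad t=\frac{r^{4}}{4\sigma^{4}},
\end{equation*}
with $a_{0}=1$ and $a_{j}$ as defined in the statement. Next, I would rewrite (\ref{derivat}) in the same variable: since $\bigl(r^{2}/(2\sigma^{2})\bigr)^{2j-1} = (2\sigma^{2}/r^{2})\, t^{j}$, a direct substitution yields
\begin{equation*}
u'(r) \;=\; \frac{4}{r}\sum_{j=1}^{\infty} b_{j}\, t^{j},
\end{equation*}
with $b_{j}$ as defined in the statement (note $b_{0}=0$ since the sum starts at $j=1$). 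Dividing by $r$ and multiplying by $\sigma^{2}/u(r)$ gives
\begin{equation*}
\sigma^{2}\,\frac{u'(r)}{r\,u(r)} \;=\; \frac{4\sigma^{2}}{r^{2}}\cdot\frac{\sum_{j=1}^{\infty} b_{j}\, t^{j}}{\sum_{j=0}^{\infty} a_{j}\, t^{j}}.
\end{equation*}

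The remaining step is to expand the ratio of the two power series. Since $a_{0}=1\neq 0$, the reciprocal is well-defined and a standard formal power series argument (as in \cite{DS}, Chapter~1) shows that the quotient equals $\sum_{j=0}^{\infty} c_{j}\, t^{j}$, where the $c_{j}$ are determined by matching coefficients in the identity
\begin{equation*}
\Bigl(\sum_{j=0}^{\infty} c_{j}\, t^{j}\Bigr)\Bigl(\sum_{j=0}^{\infty} a_{j}\, t^{j}\Bigr) \;=\; \sum_{j=0}^{\infty} b_{j}\, t^{j}.
\end{equation*}
The coefficient of $t^{0}$ forces $c_{0}a_{0}=b_{0}=0$, hence $c_{0}=0$; for $j\geq 1$, equating the coefficient of $t^{j}$ and solving for $c_{j}$ yields exactly the stated recursion $c_{j}=\tfrac{1}{a_{0}}\bigl[b_{j}-\sum_{i=1}^{j} c_{j-i}a_{i}\bigr]$. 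Substituting back proves the claim.

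There is essentially no obstacle: the proof is almost purely bookkeeping. The only points that require some care are the change of variable from $r^{2}/(2\sigma^{2})$ to $t=r^{4}/(4\sigma^{4})$ and the convergence of the power series, but the latter follows from Theorem~\ref{ode_sol} (the series for $u$ and $u'$ converge on $[0,R]$), and since $u(r)>0$ on $[0,R]$ the quotient series also converges on this range. Thus the identity holds not only as formal power series but as actual functions on $(0,R]$.
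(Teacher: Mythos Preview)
Your proposal is correct and is precisely the argument the paper has in mind: the paper itself does not spell out a proof but simply says ``Using (\ref{closedform}) and operations with power series (see \cite{DS}, Chapter~1), we get the optimal production rate in closed form,'' and your computation is exactly that power-series division carried out in detail. There is nothing to add.
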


The production rate is increasing and bounded. This fact will me made
precise in the following Lemma.

\begin{lemma}
\label{mono} The function 
\begin{equation*}
r\rightarrow \frac{u^{\prime }(r)}{ru(r)}
\end{equation*}
is increasing and 
\begin{equation}
\frac{u^{\prime }(r)}{ru(r)}\leq \frac{1}{\sigma ^{2}}.  \label{ineq}
\end{equation}
\end{lemma}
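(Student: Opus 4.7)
The plan is to reduce both parts of the lemma to elementary contradiction arguments applied to $\phi(r):=u'(r)/(ru(r))$. This quantity is continuous on $[0,R]$: the $C^2$-regularity and strict positivity of $u$ handle $(0,R]$, and the series \eqref{prop4} gives $u'(r)\sim r^{3}/((N+2)\sigma^{4})$ near $0$, so $\phi$ extends continuously by $\phi(0)=0$, with the sharper expansion $\phi(r)\sim r^{2}/((N+2)\sigma^{4})$. In this notation the bound \eqref{ineq} reads $\phi(r)\leq 1/\sigma^{2}$, and monotonicity of $u'/(ru)$ reads $\phi'\geq 0$.

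First I would rewrite \eqref{master} in its radial form $u''(r)+\tfrac{N-1}{r}u'(r)=\tfrac{r^{2}}{\sigma^{4}}u(r)$, differentiate the definition of $\phi$, and use the ODE to eliminate $u''$. A routine calculation yields the Riccati-type identity
\begin{equation*}
\phi'(r) \;=\; \frac{r}{\sigma^{4}} - \frac{N}{r}\phi(r) - r\,\phi(r)^{2}, \qquad (\star)
\end{equation*}
which I will exploit in both halves. The upper bound then follows by contradiction: if $r_{0}\in(0,R]$ were the smallest point with $\phi(r_{0})=1/\sigma^{2}$, then $\phi<1/\sigma^{2}$ on $[0,r_{0})$ forces $\phi'(r_{0})\geq 0$; but substituting $\phi(r_{0})=1/\sigma^{2}$ into $(\star)$ gives $\phi'(r_{0})=-N/(r_{0}\sigma^{2})<0$, the desired contradiction.

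For monotonicity I would apply the same template one derivative higher. The expansion $\phi(r)\sim r^{2}/((N+2)\sigma^{4})$ yields $\phi'(0^{+})=2/((N+2)\sigma^{4})>0$. Assume $r_{*}\in(0,R]$ is the smallest point with $\phi'(r_{*})=0$; then $\phi'>0$ on $(0,r_{*})$, so $\phi$ is strictly increasing from $0$ there and $\phi(r_{*})>0$, while $\phi'$ decreasing to $0$ at $r_{*}$ forces $\phi''(r_{*})\leq 0$. Differentiating $(\star)$ and evaluating at $r_{*}$ (where the $\phi'(r_{*})$-terms vanish), then using $(\star)$ at $r_{*}$ itself to substitute $\phi(r_{*})^{2}=1/\sigma^{4}-N\phi(r_{*})/r_{*}^{2}$, collapses the expression to $\phi''(r_{*})=2N\phi(r_{*})/r_{*}^{2}>0$, contradicting $\phi''(r_{*})\leq 0$.

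The main obstacle I anticipate is precisely this second-derivative computation: without the algebraic cancellation obtained by re-using $(\star)$ to eliminate the quadratic term at the critical point, the sign of $\phi''(r_{*})$ is not manifest, and it is this Riccati trick that powers both halves of the argument. The remaining ingredients—continuity of $\phi$ at the origin, the leading-order expansion, and the derivation of $(\star)$ itself—are routine calculus applied to the series \eqref{prop4}.
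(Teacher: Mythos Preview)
Your argument is correct and proceeds differently from the paper's. Both proofs rest on the same Riccati identity $(\star)$, but the paper does not write it down explicitly; instead it observes that $\phi'(r)\geq 0$ is equivalent to the quadratic inequality $\phi(r)\leq\psi(r)$, where $\psi(r):=\bigl(\sqrt{N^{2}/r^{2}+4r^{2}/\sigma^{4}}-N/r\bigr)/(2r)$ is the positive root of the quadratic coming from $(\star)$. The paper then argues by comparison: $\psi$ is increasing with horizontal asymptote $1/\sigma^{2}$, both $\phi$ and $\psi$ vanish at $0$, and convexity of $u$ forces $\phi$ to be increasing on a small initial interval, whence a continuation argument yields $\phi\leq\psi$ (hence $\phi$ increasing) on all of $[0,R]$; the bound $1/\sigma^{2}$ then drops out as the asymptote of $\psi$. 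Your route bypasses the barrier function and treats the two claims by separate first-crossing contradictions, the second of which requires differentiating $(\star)$ and re-substituting it at the critical point. The paper's packaging is conceptually economical (one comparison delivers both conclusions), while your approach is more self-contained and avoids the somewhat implicit continuation step in the paper. One minor slip: from $\phi(r)\sim r^{2}/((N+2)\sigma^{4})$ you get $\phi'(r)\sim 2r/((N+2)\sigma^{4})$, so $\phi'(0^{+})=0$, not $2/((N+2)\sigma^{4})$; what your argument actually needs, and what the expansion does give, is $\phi'(r)>0$ for small $r>0$.
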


\begin{proof}
The first part of the claim yields if the derivative of this function is
positive which boils down to 
\begin{equation*}
u^{\prime \prime }\left( r\right) \geq \frac{(u^{\prime }(r))^{2}}{u(r)}+%
\frac{u^{\prime }(r)}{r}.
\end{equation*}%
Next we use the fact that $u$ solves the following ODE 
\begin{equation*}
u^{\prime \prime }\left( r\right) +\frac{N-1}{r}u^{\prime }\left( r\right) =%
\frac{1}{\sigma ^{4}}r^{2}u\left( r\right) ,
\end{equation*}%
whence, the claim becomes 
\begin{equation*}
\frac{1}{\sigma ^{4}}r^{2}u\left( r\right) \geq N\frac{u^{\prime }(r)}{r}+%
\frac{(u^{\prime }(r))^{2}}{u(r)}.
\end{equation*}%
This is equivalent to 
\begin{equation*}
\frac{u^{\prime }(r)}{u(r)}\leq \frac{\sqrt{\frac{N^{2}}{r^{2}}+\frac{4r^{2}%
}{\sigma ^{4}}}-\frac{N}{r}}{2},
\end{equation*}%
or 
\begin{equation}
\frac{u^{\prime }(r)}{ru(r)}\leq \frac{\sqrt{\frac{N^{2}}{r^{2}}+\frac{4r^{2}%
}{\sigma ^{4}}}-\frac{N}{r}}{2r}.  \label{estim}
\end{equation}%
This argument shows that 
\begin{equation*}
r\rightarrow \frac{u^{\prime }(r)}{ru(r)},
\end{equation*}%
is increasing if and only if (\ref{estim}) holds true. However, the function 
\begin{equation*}
r\rightarrow \frac{\sqrt{\frac{N^{2}}{r^{2}}+\frac{4r^{2}}{\sigma ^{4}}}-%
\frac{N}{r}}{2r},
\end{equation*}%
is increasing, both functions are $0$ when $r=0$ (since $u^{\prime }(0)=0$)
and $r\rightarrow \frac{u^{\prime }(r)}{ru(r)}$ is increasing on some small
interval $[0,\epsilon ]$ in light of $u$ being convex (for this see Theorem %
\ref{ode_sol}). This shows that $r\rightarrow \frac{u^{\prime }(r)}{ru(r)}$
is increasing and (\ref{estim}) holds true. Moreover, since 
\begin{equation*}
r\rightarrow \frac{\sqrt{\frac{N^{2}}{r^{2}}+\frac{4r^{2}}{\sigma ^{4}}}-%
\frac{N}{r}}{2r}
\end{equation*}%
is increasing and has as asymptote at infinity $\frac{1}{\sigma ^{2}}$ we
also get the second part of the claim.
\end{proof}

\subsection{Asymptotic Analysis}

\begin{equation*}
\end{equation*}%
Let us recall the estimate for large $N$ from \cite{CDPAMC} 
\begin{equation*}
\frac{u^{\prime }(r)}{r}\leq \frac{K}{N-1}\text{, }r\neq 0.\text{{}}
\end{equation*}%
Thus, for big $N$ an approximate solution is 
\begin{equation*}
\frac{u^{\prime }(r)}{ru(r)}\leq \frac{K}{u_{0}(N-1)}\approx 0\text{, }r\neq
0\text{,}
\end{equation*}%
which says that the optimal control $p^{\ast }\approx 0$, since 
\begin{equation*}
\overline{p}_{i}=\sigma ^{2}\frac{u^{\prime }(r)}{ru(r)}x_{i}\text{, }r\neq 0%
\text{, }i=1,...,N.
\end{equation*}%
This means that if the number of goods is big then $p^{\ast }=0$ is an
approximate solution.

Next, we prove an asymptotical result.

\begin{lemma}
\label{liml} The following result hold true 
\begin{equation}  \label{lim}
\lim_{r\rightarrow\infty} \left[ \frac{u^{\prime }(r)}{ru(r)} \right] =\frac{%
1}{\sigma ^{2}}.
\end{equation}
\end{lemma}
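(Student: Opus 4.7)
The plan is to exploit the ODE satisfied by $u$ (already used in the proof of Lemma~\ref{mono}), namely
$$u''(r)+\frac{N-1}{r}u'(r)=\frac{r^{2}}{\sigma^{4}}u(r),$$
in order to derive a first-order ODE for the quantity of interest and then push it to infinity. Note first that by Lemma~\ref{mono} the function $w(r):=\frac{u'(r)}{r\,u(r)}$ is increasing on $(0,\infty)$ and bounded above by $\frac{1}{\sigma^{2}}$, so the limit $L:=\lim_{r\to\infty}w(r)$ automatically exists and belongs to $[0,\frac{1}{\sigma^{2}}]$. The job is to rule out $L<\frac{1}{\sigma^{2}}$.

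Next I would translate the linear ODE for $u$ into a Riccati-type ODE for the logarithmic derivative $v(r):=u'(r)/u(r)$, obtaining
$$v'(r)+v(r)^{2}+\frac{N-1}{r}v(r)=\frac{r^{2}}{\sigma^{4}}.$$
Substituting $v(r)=r\,w(r)$ and simplifying yields the clean relation
$$w'(r)=r\!\left(\frac{1}{\sigma^{4}}-w(r)^{2}\right)-\frac{N}{r}\,w(r),$$
valid for all $r>0$. This is the key identity.

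Now I would argue by contradiction. Suppose $L<\frac{1}{\sigma^{2}}$. Because $w$ is increasing to $L$, we have $w(r)\le L$ for all $r$, hence $\frac{1}{\sigma^{4}}-w(r)^{2}\ge \frac{1}{\sigma^{4}}-L^{2}=:c>0$. Combined with the uniform bound $w(r)\le \frac{1}{\sigma^{2}}$ from Lemma~\ref{mono}, the identity above gives
$$w'(r)\ge c\,r-\frac{N}{\sigma^{2}\,r}\longrightarrow +\infty\quad(r\to\infty),$$
so $w'(r)$ is eventually bounded below by any prescribed constant. Integrating this lower bound forces $w(r)\to +\infty$, which contradicts the boundedness $w(r)\le \frac{1}{\sigma^{2}}$. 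Hence $L=\frac{1}{\sigma^{2}}$, which is exactly~(\ref{lim}).

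The only delicate point is verifying that the substitution $v=rw$ gives the stated first-order ODE; no serious analytic obstacle arises since monotonicity and the upper bound have already been established in Lemma~\ref{mono}, making the contradiction step essentially immediate. No separate argument for the existence of the limit is needed because monotonicity plus boundedness hand it to us for free.
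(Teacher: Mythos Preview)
Your argument is correct and takes a genuinely different route from the paper's. Both proofs start the same way: the limit $L=\lim_{r\to\infty}\frac{u'(r)}{ru(r)}$ exists in $[0,\tfrac{1}{\sigma^{2}}]$ by Lemma~\ref{mono}. From there the paper applies L'H\^opital's rule to the quotient $u'(r)/(ru(r))$, substitutes the ODE for $u''$, and after dividing through by $r^{2}u(r)$ obtains the algebraic relation $L=\tfrac{1}{L\sigma^{4}}$, whence $L=\tfrac{1}{\sigma^{2}}$. You instead rewrite the second-order linear ODE as the first-order Riccati identity $w'(r)=r\bigl(\tfrac{1}{\sigma^{4}}-w(r)^{2}\bigr)-\tfrac{N}{r}w(r)$ for $w=\tfrac{u'}{ru}$ and run a direct contradiction: if $L<\tfrac{1}{\sigma^{2}}$ the right-hand side is eventually bounded below by a linear function of $r$, forcing $w\to\infty$ against the bound $w\le\tfrac{1}{\sigma^{2}}$. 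Your approach avoids the (mildly delicate) verification that the post-L'H\^opital quotient actually has a limit, and the Riccati identity makes transparent \emph{why} the value $\tfrac{1}{\sigma^{2}}$ is forced (it is the positive root of $\tfrac{1}{\sigma^{4}}-w^{2}=0$). The paper's route is shorter and yields the value of $L$ by solving an equation rather than by elimination; either argument is complete.
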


\begin{proof}
Because the function $u$ is convex and increasing (for this see Theorem \ref%
{ode_sol}) it follows that 
\begin{equation*}
\lim_{r\rightarrow \infty }u(r)=\lim_{r\rightarrow \infty }u^{\prime
}(r)=\infty .
\end{equation*}%
In light of Lemma \ref{mono} the limit exists and is finite. Let us denote
it by $l.$ L'Hospital rule yields 
\begin{equation}
l=\lim_{r\rightarrow \infty }\left[ \frac{u^{\prime }(r)}{ru(r)}\right]
=\lim_{r\rightarrow \infty }\left[ \frac{u^{\prime \prime }(r)}{%
u(r)+ru^{\prime }(r)}\right] .  \label{L'Hos}
\end{equation}%
Next we use the fact that $u$ solves the following ODE 
\begin{equation*}
u^{\prime \prime }\left( r\right) +\frac{N-1}{r}u^{\prime }\left( r\right) =%
\frac{1}{\sigma ^{4}}r^{2}u\left( r\right) ,
\end{equation*}%
whence 
\begin{equation*}
u^{\prime \prime }\left( r\right) =\frac{1}{\sigma ^{4}}r^{2}u\left(
r\right) -\frac{N-1}{r}u^{\prime }\left( r\right) .
\end{equation*}%
Inserting this into (\ref{L'Hos}) we get 
\begin{equation*}
l=\frac{1}{l\sigma ^{4}}.
\end{equation*}%
Therefore 
\begin{equation*}
l=\lim_{r\rightarrow \infty }\left[ \frac{u^{\prime }(r)}{ru(r)}\right] =%
\frac{1}{\sigma ^{2}}.
\end{equation*}
\end{proof}

\subsection{\textbf{Simulation of the optimal inventory}}

Let us recall the SDE system 
\begin{equation}
dy_{i}\left( t\right) =p_{i}^{\ast }dt+\sigma dw_{i}\text{, }y_{i}\left(
0\right) =y_{i}^{0}\text{, }i=1,...,N,  \label{consf}
\end{equation}%
governing the optimal inventory.

This SDE system can be simulated numerically. It can be done using a Euler
scheme as follows: start with $y_{i}^{0}$, $i=1,...,N$, and 
\begin{equation*}
r=\Sigma _{i=1}^{N}\left[ y_{i}^{0}\right] ^{2}\text{, }r\neq 0\text{.}
\end{equation*}%
On $\left[ 0,\Delta t\right] $ we approximate 
\begin{equation*}
y_{i}\left( \Delta t\right) \simeq \sigma ^{2}\frac{u^{\prime }\left(
r\right) }{ru\left( r\right) }y_{i}^{0}+\sigma \sqrt{\Delta t}Z_{i}^{0}\text{%
, }r\neq 0\text{,}
\end{equation*}%
where $Z_{i}^{0}$ is standard normal.

Next repeat this on $\left[ \Delta t,2\Delta t\right] $ as follows: 
\begin{equation*}
r\left( \Delta t\right) =\Sigma _{i=1}^{N}\left[ y_{i}^{\Delta t}\right]
^{2},
\end{equation*}%
and 
\begin{equation*}
y_{i}\left( 2\Delta t\right) \simeq \sigma ^{2}\frac{u^{\prime }\left(
r\left( \Delta t\right) \right) }{r\left( \Delta t\right) u\left( r\left(
\Delta t\right) \right) }y_{i}^{\Delta t}+\sigma \sqrt{\Delta t}Z_{i}^{1}%
\text{,}
\end{equation*}%
where $Z_{i}^{1}$ is standard normal. The process is then repeated on $\left[
2\Delta t,3\Delta t\right] ,$ and so on. In the following we present two
plots resulting from this simulation procedure. We considered $N=2$ (two
economic goods) and $\sigma =2$ in the first plot $\sigma =5$ in the second
plot.

\subsection{Numerical Experiments}

In the first set of experiments we set $\sigma =0.5,$ and vary $N$ the
number of goods' type.

We observe from these set of plots the following patterns:

\begin{itemize}
\item[1)] the production rate is an increasing function of the total number
of goods produced, fact explained by Lemma \ref{mono};

\item[2)] when the total number of goods produced exceed a certain threshold
the production rate converges to $1,$ fact explained by Lemma \ref{liml};

\item[3)] the production rate is a decreasing function of the total number
of goods produced.
\end{itemize}

In the next set of plots we set $N=100,$ and vary $\sigma $.

We observe from these set of plots the following patterns:

\begin{itemize}
\item[1)] the production rate is an increasing function of the total number
of goods produced, fact explained by Lemma \ref{mono};

\item[2)] when the total number of goods produced exceed a certain threshold
the production rate converges, fact explained by Lemma \ref{liml};

\item[3)] the production rate is a decreasing function of $\sigma .$
\end{itemize}

\section{ Other Applications \label{sec4}}

The value function equation characterizing the optimal control, i.e., (\ref%
{master}), appears naturally in other practical applications. There is by
now a vast literature concerning on the existence of positive solutions and
their behaviour for the partial differential equation%
\begin{equation}
\Delta u\left( x\right) =f\left( x,u\left( x\right) \right) \text{ for }x\in
\Omega ,  \label{1}
\end{equation}%
where $\Omega $ is a bounded or unbounded domain of $\mathbb{R}^{N}$ ($N\geq
1$) or the all space $\mathbb{R}^{N}$ and $f$ is a function suitable chosen.

The interest in studying the above equation comes, for instance, from
various physical situations, such as quantum mechanics, quantum optics,
nuclear physics and reaction-diffusion processes (cf. \cite{AL,TL,P,CJJ}).
For instance, a basic preoccupation for the study of problem (\ref{1}) is
the time-independent Schr\"{o}dinger equation (single non-relativistic
particle) \ 
\begin{equation}
\Delta u=\frac{2m}{\text{ 
h{\hskip-.2em}\llap{\protect\rule[1.1ex]{.325em}{.1ex}}{\hskip.2em}%
}^{2}}(V\left( x\right) -E)u,\text{ 
h{\hskip-.2em}\llap{\protect\rule[1.1ex]{.325em}{.1ex}}{\hskip.2em}%
}=h/2\pi ,  \label{sch}
\end{equation}%
where $h$ is Planck's constant, 
h{\hskip-.2em}\llap{\protect\rule[1.1ex]{.325em}{.1ex}}{\hskip.2em}
is the reduced Planck constant (or the Dirac constant), $E$ and $V(x)$ are
the total (non relativistic) and potential energies of a particle of mass $m$%
, respectively.

Besides the importance in applications, the equation (\ref{1}) also raises
many difficult mathematical problems that need to be solved. In general, the
existence of the solutions and numerical approximation of the elliptic
problem (\ref{1}) is widely open. See the paper of Santos, Zhou and Santos 
\cite{CJJ}, which includes a nice survey and recent progresses for Eq. (\ref%
{1}).

Let us mention this result which is interesting in itself.

\begin{theorem}
(see \cite[Theorem 2.1, p. 199]{HY}) The problem (\ref{master}) subject to
the Dirichlet boundary condition 
\begin{equation}
u\left( x\right) \rightarrow \infty \text{ as }\left\vert x\right\vert
\rightarrow R\text{,}  \label{ldindat}
\end{equation}
has no positive solutions.
\end{theorem}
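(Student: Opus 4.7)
The plan is to derive a contradiction by comparing a hypothetical boundary blow-up solution with one of the bounded smooth solutions furnished by Theorem~\ref{ode_sol}. Suppose, aiming for contradiction, that $u\in C^{2}(B_R(0))$ is a positive solution of $\Delta u = \sigma^{-4}|x|^{2}u$ on $B_R(0)$ satisfying $u(x)\to\infty$ as $|x|\to R$. Let $u_{1}$ be the radial solution produced by Theorem~\ref{ode_sol} with $\alpha=1$; since $u_{1}\in C^{2}[0,R]$, the map $x\mapsto u_{1}(|x|)$ is continuous, strictly positive, and bounded on the closed ball $\overline{B_R(0)}$, with the estimate (\ref{prop2}) giving an explicit upper bound.

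Linearity of the equation is what makes the comparison go through cleanly. For each $\lambda>0$, the difference $v_\lambda(x):=u(x)-\lambda u_{1}(|x|)$ again satisfies
\begin{equation*}
\Delta v_\lambda - \frac{|x|^{2}}{\sigma^{4}}\,v_\lambda = 0 \quad\text{in }B_R(0).
\end{equation*}
Two properties then force a contradiction. First, since $u_{1}$ remains bounded while $u$ blows up, $v_\lambda(x)\to+\infty$ as $|x|\to R$. Second, fixing any $x_{*}\in B_R(0)$ and choosing $\lambda > u(x_{*})/u_{1}(|x_{*}|)$ yields $v_\lambda(x_{*})<0$. The boundary blow-up confines the near-infimum of $v_\lambda$ to a compact subset of $B_R(0)$, and continuity then produces an interior minimizer $x_{0}\in B_R(0)$ with $v_\lambda(x_{0})<0$.

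The contradiction is closed by the strong maximum principle. Rewriting the linear PDE in the Gilbarg--Trudinger form $\Delta v_\lambda + c(x)v_\lambda = 0$ with zeroth-order coefficient $c(x)=-|x|^{2}/\sigma^{4}\le 0$, the function $w:=-v_\lambda$ satisfies the same equation and attains a strictly positive interior maximum at $x_{0}$. The strong maximum principle (Gilbarg--Trudinger, Theorem~3.5) then forces $w$, hence $v_\lambda$, to be constant on the connected open set $B_R(0)$, contradicting $v_\lambda\to+\infty$ at $\partial B_R(0)$. The only delicate point is really bookkeeping: one must rewrite $\Delta u = c_{0}u$ with $c_{0}\ge 0$ as $\Delta u + (-c_{0})u=0$ so that the zeroth-order coefficient carries the sign required by the classical statement of the strong maximum principle; the attainment of the interior minimum itself is automatic from the boundary blow-up of $v_\lambda$ and requires no further care.
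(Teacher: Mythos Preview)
The paper does not prove this statement itself; it is quoted from \cite{HY} without argument. Your proof is correct and self-contained: exploiting linearity, you subtract a large multiple of the bounded radial solution $u_{1}$ supplied by Theorem~\ref{ode_sol} to obtain $v_{\lambda}=u-\lambda u_{1}$, which solves the same equation, blows up at $\partial B_{R}(0)$, and is strictly negative at some interior point; the strong maximum principle for $\Delta+c$ with $c(x)=-|x|^{2}/\sigma^{4}\le 0$ then rules out the resulting interior negative minimum unless $v_{\lambda}$ is constant, which is incompatible with the boundary blow-up. The bookkeeping (existence of the interior minimizer from the boundary behaviour, the sign convention on the zeroth-order coefficient) is handled correctly.

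By way of comparison, general nonexistence results for boundary blow-up solutions of $\Delta u=p(|x|)g(u)$, of the type established in \cite{HY}, typically rest on a Keller--Osserman integral criterion: blow-up on bounded domains requires $\int^{\infty}[2G(t)]^{-1/2}\,dt<\infty$ with $G'=g$, and for the linear nonlinearity $g(u)=u$ this integral diverges. Your route is shorter and more tightly linked to the paper's own development, since it recycles the explicit bounded solution from Theorem~\ref{ode_sol} instead of invoking an abstract growth condition; the trade-off is that it leans essentially on linearity (so that $v_{\lambda}$ again solves the equation), whereas the Keller--Osserman machinery covers a much wider class of nonlinearities.
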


Even if the next result has no importance in economic theories, it helps us
to understand the beauty of this problem and to discover other questions
that will need to be solved by the researchers.

\begin{theorem}
\label{cjj}(see \cite{CJJ}) The problem (\ref{master}) with $B_{R}\left(
0\right) $ replaced with $\mathbb{R}^{N}$, admits a sequence of symmetric
radial solutions $u_{k}\left( \left\vert x\right\vert \right) \in
C^{2}\left( \mathbb{R}^{N}\right) $ with 
\begin{equation*}
u_{k}\left( 0\right) =\infty \text{ as }k\rightarrow \infty \text{.}
\end{equation*}%
Besides this, $u_{k}^{\prime }\geq 0$ in $\left[ 0,\infty \right) $.
\end{theorem}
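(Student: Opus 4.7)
The strategy is to exploit the linearity of equation (\ref{master}) together with the explicit power series solution already constructed in Theorem \ref{ode_sol}, and to observe that this power series in fact defines a global solution on $\mathbb{R}^N$, not merely on $B_R(0)$.

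First I would upgrade Theorem \ref{ode_sol} from the ball to all of $\mathbb{R}^N$. Take $\alpha=1$ and consider the series (\ref{closedform}); its $j$-th coefficient contains $j!(N+2)(N+6)\cdots(N+4j-2)$ in the denominator, which grows faster than $(Cr^2)^j$ for any fixed $r$, so by the ratio test the series, and the termwise-differentiated series (\ref{derivat}), converge absolutely and uniformly on every compact subset of $[0,\infty)$. This defines a radial $C^2$ function $u$ on $\mathbb{R}^N$ (smoothness at the origin follows because only even powers of $r$ occur). Termwise differentiation being legitimate under uniform convergence, the same computation used in the appendix for Theorem \ref{ode_sol} shows that $u$ satisfies $\Delta u(x)=\sigma^{-4}|x|^2 u(x)$ on all of $\mathbb{R}^N$. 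Moreover, every coefficient in (\ref{derivat}) is nonnegative, so $u'(r)\geq 0$ on $[0,\infty)$.

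Second, I would use the linearity of the PDE: for each positive integer $k$ set $u_k(x):=k\,u(|x|)$. Then $u_k\in C^2(\mathbb{R}^N)$ is radial, solves (\ref{master}) on $\mathbb{R}^N$, satisfies $u_k(0)=k\to\infty$ as $k\to\infty$, and inherits $u_k'(r)=k\,u'(r)\geq 0$ on $[0,\infty)$. This delivers the desired sequence and concludes the argument.

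There is essentially no hard step: the only substantive verification is the global convergence of the power series, which is immediate from the super-exponential growth of the denominators in the coefficients. Thus, from our standpoint, Theorem \ref{cjj} is really just a direct corollary of Theorem \ref{ode_sol} combined with Remark \ref{unic}, once one notes that the series solution is entire in $|x|^2$ and that the PDE is linear and homogeneous.
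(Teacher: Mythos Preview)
Your argument is correct. The paper itself does not prove Theorem~\ref{cjj} at all: it is quoted from \cite{CJJ} and stated without proof, as part of the survey in Section~\ref{sec4}. So there is no ``paper's own proof'' to compare against beyond the external citation.

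That said, your route is worth recording because it is more self-contained than deferring to \cite{CJJ}. You observe that the explicit series (\ref{closedform}) has infinite radius of convergence (the ratio of successive terms is $\frac{r^4}{4\sigma^4 j(N+4j-2)}\to 0$), so Theorem~\ref{ode_sol} in fact produces a global $C^2$ radial solution on $\mathbb{R}^N$ with $u'\geq 0$ everywhere; then linearity and homogeneity of (\ref{master}) let you scale by $k$ to get $u_k(0)=k\to\infty$. This makes Theorem~\ref{cjj} a corollary of material already in the paper, whereas the paper treats it as an imported result. The only minor point to tidy is the phrase ``grows faster than $(Cr^2)^j$'': what you need (and what the ratio test gives directly) is that the coefficients decay faster than any geometric sequence, not that the denominators dominate $(Cr^2)^j$ for fixed $r$; but the conclusion is unaffected.
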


In the next, we provide two exact solutions for the problem (\ref{master})
with $B_{R}\left( 0\right) $ replaced with $\mathbb{R}^{4}\setminus \{0_{%
\mathbb{R}^{4}}\}$. They are: 
\begin{eqnarray}
u_{1}\left( x\right) &=&\alpha e^{\frac{1}{2\sigma ^{2}}\left\vert
x\right\vert ^{2}}\left\vert x\right\vert ^{-2}\text{, }\alpha \in \mathbb{R}%
\text{ and }x\in \mathbb{R}^{4}\setminus \{0_{\mathbb{R}^{4}}\},
\label{sol1} \\
u_{2}\left( x\right) &=&\alpha e^{-\frac{1}{2\sigma ^{2}}\left\vert
x\right\vert ^{2}}\left\vert x\right\vert ^{-2}\text{, }\alpha \in \mathbb{R}%
\text{ and }x\in \mathbb{R}^{4}\setminus \{0_{\mathbb{R}^{4}}\}.
\label{sol2}
\end{eqnarray}%
The solutions (\ref{sol1}) and (\ref{sol2}) were determined by analyzing the
series in (\ref{prop4}) and can be used by physicists in the study of the
time-independent Schrodinger equation (\ref{sch}). Moreover, reasoning in
the same manner we think that similar solutions can be constructed for the
total (non relativistic) and potential energies of a particle of mass $m$ in
(\ref{sch}).

Next, we posit the following open problems inspired by the two solutions and 
\cite{CJJ}.

\begin{problem}
\label{proex1}Assume that $g\in C^{1}\left( \left[ 0,\infty \right) ,\left[
0,\infty \right) \right) $ is a non-decreasing function satisfying%
\begin{equation*}
\int_{\gamma }^{\infty }\frac{1}{\sqrt{\int_{0}^{t}g\left( s\right) ds}}%
dt=\infty \text{, for }t\geq \gamma >0\text{,}
\end{equation*}%
and $p$ is a non-negative continuous symmetric radially function such that%
\begin{equation*}
\int_{0}^{\infty }t^{1-N}\int_{0}^{t}s^{N-1}p\left( s\right) dsdt=\infty .
\end{equation*}%
Then, there exists at least one positive radially symmetric solution $u\in
C^{2}\left( \mathbb{R}^{N}\setminus \{0_{\mathbb{R}^{N}}\}\right) $ for the
problem%
\begin{equation}
\Delta u\left( x\right) =p\left( r\right) g\left( u\left( x\right) \right) 
\text{ in }\mathbb{R}^{N}\text{, }r=\left\vert x\right\vert \text{,{}}
\label{ecgen}
\end{equation}%
subject to the Dirichlet boundary condition%
\begin{equation}
u\left( x\right) \rightarrow \infty \text{ as }\left\vert x\right\vert
\rightarrow \infty \text{,}  \label{problem1}
\end{equation}%
such that%
\begin{equation*}
u\left( x\right) \rightarrow \infty \text{ as }\left\vert x\right\vert
\rightarrow 0\text{.}
\end{equation*}%
Moreover, $\partial u/\partial r\geq 0$ on $\left[ t_{0},\infty \right) $
and $\partial u/\partial r<0$ on $\left[ 0,t_{0}\right) $, for some $%
t_{0}\geq 0$.
\end{problem}

\begin{problem}
\label{proex2}Under the same assumptions on $p$ and $g$ as in Problem \ref%
{proex1}, there exists at least one positive radially symmetric solution $%
u\in C^{2}\left( \mathbb{R}^{N}\setminus \{0_{\mathbb{R}^{N}}\}\right) $ of (%
\ref{ecgen}) subject to the Dirichlet boundary condition%
\begin{equation}
u\left( x\right) \rightarrow 0\text{ as }\left\vert x\right\vert \rightarrow
\infty \text{,}  \label{problem2}
\end{equation}%
such that%
\begin{equation*}
u\left( x\right) \rightarrow \infty \text{ as }\left\vert x\right\vert
\rightarrow 0\text{.}
\end{equation*}%
Moreover, $\partial u/\partial r\leq 0$ on $\left[ 0,\infty \right) $.
\end{problem}

Example of solutions for problems \ref{proex1}, and \ref{proex2} are the
ones given in (\ref{sol1}), and (\ref{sol2}). To the best of our knowledge
the only result for the problems \ref{proex1}, and \ref{proex2} is Theorem %
\ref{cjj}.

\section{Appendix}

\subsection{Proof of Theorem \protect\ref{ode_sol}}

We consider the radial form of the problem (\ref{master}) subject to the
Dirichlet boundary condition (\ref{radial}), i.e., 
\begin{equation}
\left\{ 
\begin{array}{c}
u_{\alpha }^{\prime \prime }\left( r\right) +\frac{N-1}{r}u_{\alpha
}^{\prime }\left( r\right) =\frac{1}{\sigma ^{4}}r^{2}u_{\alpha }\left(
r\right) \text{ in }[0,R]\text{,} \\ 
u_{\alpha }\left( 0\right) =\alpha .%
\end{array}%
\right.  \label{radialpr}
\end{equation}%
We show that the solution $u_{\alpha }\left( r\right) $ of (\ref{radialpr})
can be obtained succesively in the following way 
\begin{equation}
\left\{ 
\begin{array}{ll}
u_{\alpha }^{0}\left( r\right) =u_{\alpha }\left( 0\right) =\alpha , &  \\ 
u_{\alpha }^{k}\left( r\right) =\alpha
+\int_{0}^{r}t^{1-N}\int_{0}^{t}s^{N-1}\frac{1}{\sigma ^{4}}s^{2}u_{\alpha
}^{k-1}\left( s\right) dsdt & \text{for }0<r\leq R\text{ and }k\in \mathbb{N}%
^{\ast }.%
\end{array}%
\right.  \label{numrec}
\end{equation}%
It is easy to see that \{$u_{\alpha }^{k}\left( r\right) $\}$_{k\geq 0}$ is
a nondecreasing sequence of functions satisfying%
\begin{eqnarray}
u_{\alpha }^{k+1}\left( r\right) -u_{\alpha }^{k}\left( r\right) &\leq &%
\frac{\alpha }{\left( k+1\right) !}\left( \frac{r^{4}}{4\sigma ^{4}(N+2)}%
\right) ^{k+1}  \label{1in} \\
&\leq &\frac{\alpha }{\left( k+1\right) !}\left( \frac{R^{4}}{4\sigma
^{4}(N+2)}\right) ^{k+1}\overset{k\rightarrow \infty }{\rightarrow }0,
\label{2in}
\end{eqnarray}%
for all $r\in \left[ 0,R\right] $. Then \{$u_{\alpha }^{k}\left( r\right) $\}%
$_{k\geq 0}$ is a Cauchy sequence of functions on $\left[ 0,R\right] $. It
is a straightforward argument to prove that 
\begin{equation*}
u_{\alpha }^{k}\in C^{2}\left[ 0,R\right] \text{, }k\in \mathbb{N}.
\end{equation*}%
Since a Cauchy sequence of functions is convergent, it has a limit function $%
u_{\alpha }\left( r\right) $ and the convergence is uniform. Moreover, since
an uniformly Cauchy sequence of continuous functions has a continuous limit,
then $u_{\alpha }\left( r\right) $ is a continuous function on $\left[ 0,R%
\right].$

By passing to the limit in (\ref{numrec}) we obtain that $u_{\alpha }\left(
r\right) $ verifies the integral form of the problem (\ref{master}) subject
to the Dirichlet boundary condition (\ref{radial})%
\begin{equation}
u_{\alpha }\left( r\right) =\alpha +\int_{0}^{r}t^{1-N}\int_{0}^{t}s^{N-1}%
\frac{1}{\sigma ^{4}}s^{2}u_{\alpha }\left( s\right) dsdt\text{, in }[0,R].
\label{intform}
\end{equation}%
Hence, the limit function $u_{\alpha }\left( r\right) $ is the solution of (%
\ref{master}) subject to the Dirichlet boundary condition (\ref{radial}).

Next, we examine the sequence \{$(u_{\alpha }^{k}\left( r\right) )^{\prime }$%
\}$_{k\geq 0}$. We note first that\ 
\begin{equation}
0\leq \left( u_{\alpha }^{k}\right) ^{\prime
}(r)=r^{1-N}\int_{0}^{r}t^{N-1}t^{2}u_{\alpha }^{k-1}\left( t\right) dt.
\label{deriv}
\end{equation}%
Thus, the function $r\rightarrow u_{\alpha }^{k}(r)$ is nondecreasing for
all $k\in \mathbb{N}$. Using (\ref{1in}) and (\ref{2in}) we get 
\begin{eqnarray}
\left\vert \left( u_{\alpha }^{k+1}\right) ^{\prime }(r)-\left( u_{\alpha
}^{k}\right) ^{\prime }(r)\right\vert &\leq &\frac{r^{3}\alpha }{\sigma
^{4}(N+2)k!}\left( \frac{r^{4}}{4\sigma ^{4}(N+2)}\right) ^{k}  \notag \\
&\leq &\frac{R^{3}\alpha }{\sigma ^{4}(N+2)k!}\left( \frac{R^{4}}{4\sigma
^{4}(N+2)}\right) ^{k}\overset{k\rightarrow \infty }{\rightarrow }0.
\label{derivative}
\end{eqnarray}%
Consequently,%
\begin{equation*}
(u_{\alpha }^{k}\left( r\right) )^{\prime }\overset{k\rightarrow \infty }{%
\rightarrow }(u_{\alpha }\left( r\right) )^{\prime }\text{ uniformly in }%
\left[ 0,R\right] \text{,}
\end{equation*}%
which implies that $(u_{\alpha }\left( r\right) )^{\prime }$ is a continuous
function on $\left[ 0,R\right] $. A direct computation shows that 
\begin{equation*}
u_{\alpha }\in C^{2}\left[ 0,R\right] .
\end{equation*}%
Next, let us prove (\ref{prop2}). To do this we use (\ref{1in}) succesively 
\begin{eqnarray}
u_{\alpha }^{k+1}\left( r\right) &\leq &\frac{\alpha }{\left( k+1\right) !}%
\left( \frac{r^{4}}{4\sigma ^{4}(N+2)}\right) ^{k+1}+u_{\alpha }^{k}\left(
r\right)  \notag \\
&\leq &\frac{\alpha }{\left( k+1\right) !}\left( \frac{r^{4}}{4\sigma
^{4}(N+2)}\right) ^{k+1}+\frac{\alpha }{k!}\left( \frac{r^{4}}{4\sigma
^{4}(N+2)}\right) ^{k}+u_{\alpha }^{k-1}\left( r\right)  \notag \\
&&...  \notag \\
&\leq &\underset{j=0}{\overset{k+1}{\Sigma }}\frac{\alpha }{j!}\left( \frac{%
r^{4}}{4\sigma ^{4}(N+2)}\right) ^{j}.  \label{insol}
\end{eqnarray}%
On the other hand, we note that 
\begin{equation}
u_{\alpha }\left( r\right) =\lim_{k\rightarrow \infty }u_{\alpha
}^{k+1}\left( r\right) \leq \underset{j=0}{\overset{\infty }{\Sigma }}\frac{%
\alpha }{j!}\left( \frac{r^{4}}{4\sigma ^{4}(N+2)}\right) ^{j}=\alpha e^{%
\frac{r^{4}}{4\sigma ^{4}(N+2)}},\text{ }  \label{limexp}
\end{equation}%
for all $r\in \lbrack 0,R]$.

Next, let us prove (\ref{prop3}). We observe that \{$\left( u_{\alpha
}^{k}\right) ^{\prime }(r)$\}$_{k\geq 0}$ is a nondecreasing sequence of
continuous functions. Following the proof in (\ref{insol}), and using (\ref%
{derivative}) successively it can be shown the inequality 
\begin{eqnarray*}
\left( u_{\alpha }^{k+1}\right) ^{\prime }(r) &\leq &\frac{\alpha r^{3}}{%
\sigma ^{4}(N+2)k!}\left( \frac{r^{4}}{4\sigma ^{4}(N+2)}\right) ^{k}+\left(
u_{\alpha }^{k}\right) ^{\prime }(r) \\
&&... \\
&\leq &\frac{\alpha r^{3}}{\sigma ^{4}(N+2)}\underset{j=0}{\overset{k}{%
\Sigma }}\frac{1}{j!}\left( \frac{r^{4}}{4\sigma ^{4}(N+2)}\right) ^{j}.
\end{eqnarray*}%
Repeating the arguments of (\ref{limexp}) we notice that%
\begin{eqnarray*}
\left( u_{\alpha }\right) ^{\prime }(r) &=&\lim_{k\rightarrow \infty }\left(
u_{\alpha }^{k+1}\right) ^{\prime }(r) \\
&\leq &\frac{\alpha r^{3}}{\sigma ^{4}(N+2)}\underset{j=0}{\overset{\infty}{%
\Sigma }}\frac{1}{j!}\left( \frac{r^{4}}{4\sigma ^{4}(N+2)}\right) ^{j} \\
&=&\frac{\alpha r^{3}}{\sigma ^{4}(N+2)}e^{\frac{r^{4}}{4\sigma ^{4}(N+2)}},
\end{eqnarray*}%
for all $r\in \lbrack 0,R]$.

Next, let us prove (\ref{prop4}). To do this, we observe that%
\begin{eqnarray*}
u_{\alpha }^{1}\left( r\right) &=&\alpha
+\int_{0}^{r}t^{1-N}\int_{0}^{t}s^{N+1}\frac{1}{\sigma ^{4}}u_{\alpha
}^{0}\left( r\right) dsdt \\
&=&\alpha +\int_{0}^{r}t^{1-N}\int_{0}^{t}s^{N+1}\frac{1}{\sigma ^{4}}\alpha
dsdt \\
&=&\alpha \left( 1+\frac{1}{\sigma ^{4}}\int_{0}^{r}\frac{t^{3}}{N+2}%
dt\right) \\
&=&\alpha \left( 1+\frac{1}{4\sigma ^{4}}\frac{r^{4}}{N+2}\right) .
\end{eqnarray*}%
Substituting $u_{\alpha }^{1}\left( r\right) $ into%
\begin{equation*}
u_{\alpha }^{2}\left( r\right) =\alpha
+\int_{0}^{r}t^{1-N}\int_{0}^{t}s^{N+1}\frac{1}{\sigma ^{4}}u_{\alpha
}^{1}\left( r\right) dsdt,
\end{equation*}%
we obtain%
\begin{equation*}
u_{\alpha }^{2}\left( r\right) =\alpha \left( 1+\frac{1}{4\sigma ^{4}}\frac{%
r^{4}}{\left( N+2\right) }+\frac{r^{8}}{\sigma ^{8}\cdot 4\cdot 8\cdot
\left( N+2\right) \left( N+6\right) }\right) .
\end{equation*}%
Continuing this process we get%
\begin{eqnarray*}
u_{\alpha }^{k}\left( r\right) &=&\alpha +\frac{1}{\sigma ^{4}}%
\int_{0}^{r}t^{1-N}\int_{0}^{t}s^{N+1}u_{\alpha }^{k-1}\left( s\right) dsdt
\\
&=&\alpha \left( 1+\underset{j=1}{\overset{k}{\Sigma }}\frac{1}{j!\left(
N+2\right) \left( N+6\right) ...(N+4j-2)}\left( \frac{r^{2}}{2\sigma ^{2}}%
\right) ^{2j}\right) .
\end{eqnarray*}%
Since the sequence of functions \{$u_{\alpha }^{k}$\}$_{k\geq 0}$ is uniform
convergent to the limit function $u_{\alpha }\left( r\right) $ then (\ref%
{prop4}) is proved.

The power series representation of function $u_{\alpha }\left( r\right) $
can be differentiated to obtain a power series representation of its
derivative $u_{\alpha }^{\prime }\left( r\right) $. Thus, we obtain that $%
u_{\alpha }\left( r\right) $ is differentiable on $[0,R]$ and (\ref{derivat}%
) holds true. In addition, the term-by-term derivative of a power series has
the same interval of convergence as the original power series.

Next, (\ref{derivat}) leads to $u_{\alpha }^{\prime }(0)=0$, whence (\ref%
{prop1}) is proved. A direct computation shows that 
\begin{equation*}
u_{\alpha }\in C^{2}\left( \left[ 0,R\right] \right) .
\end{equation*}%
The convexity of the solution is proved in \cite{CDPAMC} and the uniqueness
of solution follows from Remark \ref{unic}. The monotonicity of the solution
is now obvious. This completes the proof.

\textbf{Acknowledgements. }This work was supported by a mobility grant of
the Romanian Ministery of Research and Innovation, CNCS-UEFISCDI, project
number PN-III-P1-1.1-MCD-2019-0151, within PNCDI III, and NSERC grant
5-36700.

\end{document}